\renewcommand{\leq}{\leqslant}
\renewcommand{\geq}{\geqslant}
\newcommand{\Z}{\mathbb{Z}}
\newcommand{\eps}{\varepsilon}
\newcommand{\divides}{\mathrel{|}}
\newcommand{\ndivides}{\mathrel{\nmid}}
\DeclareMathOperator{\ord}{ord}
\DeclareMathOperator{\lcm}{lcm}
\newcommand{\tpmod}[1]{{\@displayfalse\pmod{#1}}}
\algnewcommand{\LineComment}[1]{\Statex \hskip\ALG@thistlm {\color{gray} -- \emph{#1}}}
\newcommand{\StateFullWidth}[1]{\State \makebox[\dimexpr\linewidth-\algorithmicindent\relax][l]{#1}}
\newlist{inputoutputlist}{itemize}{1}
\setlist[inputoutputlist]{label=--,topsep=0pt,leftmargin=20pt}
\newtheorem{thm}{Theorem}[section]
\newaliascnt{lem}{thm}
\newtheorem{lem}[lem]{Lemma}
\newaliascnt{prop}{thm}
\newaliascnt{cor}{thm}
\theoremstyle{definition}
\theoremstyle{remark}
\newaliascnt{rem}{thm}
\newtheorem{rem}[rem]{Remark}
\numberwithin{equation}{section}
\numberwithin{algorithm}{section}
\begin{document}

\title[Deterministically finding elements of large order]%
{Deterministic methods for finding elements \\ of large multiplicative order}


\author[D. Harvey]{David Harvey}
\address{School of Mathematics and Statistics, University of New South Wales, Sydney NSW 2052, Australia}
\email{d.harvey@unsw.edu.au}

\author[M. Hittmeir]{Markus Hittmeir}
\address{NORCE Research, Nygårdsgaten 112, 5008 Bergen, Norway}
\email{mahi@norceresearch.no}
\thanks{The second author was supported by the Research Council of Norway (grant 357539).}

\subjclass[2020]{Primary 11Y16, Secondary 11Y05}

\date{}

\begin{abstract}
We revisit the problem of rigorously and deterministically finding elements of large order in the multiplicative group of integers modulo a natural number $N$.
Solving this problem is an essential step in several recent deterministic algorithms for factoring $N$, including the currently fastest ones.
In 2018, the second author gave an algorithm that for a given target order $D \geq N^{2/5}$, finds either an element of order exceeding~$D$, or a nontrivial divisor of $N$, or proves that $N$ is prime.
The running time was
\[
O\left(\frac{D^{1/2}}{(\log \log D)^{1/2}} \log^2 N \right)
\]
bit operations, asymptotically the same as the cost of computing the order of a \emph{single} element using Sutherland's optimisation of the classical babystep-giantstep method.
Subsequent work by several authors weakened the hypothesis $D \geq N^{2/5}$ to $D \geq N^{1/6}$. In this paper, we show that the hypothesis may be dropped altogether. Moreover, if $N$ is prime, we can guarantee returning an element of order exceeding~$D$, rather than a proof that $N$ is prime.
\end{abstract}

\maketitle
\section{Introduction}
\label{sec:intro}

Let $N \geq 2$ be an integer and consider
the multiplicative group $\Z_N^*$ of invertible residues modulo $N$.
The order of an element $\alpha \in \Z_N^*$, denoted $\ord_N(\alpha)$,
is defined to be the smallest positive integer $k$ such that $\alpha^k = 1$.

In this paper we are interested in the following \emph{large order problem}:
given $N$ and a positive integer~$D$,
find (if possible) an element of $\Z_N^*$ such that ${\ord_N(\alpha) > D}$.
This problem appears in many practical applications,
including for example cryptographic schemes based on the
discrete logarithm problem \cite{DH-dlp, E-elgamal}
and the generation of pseudorandom numbers \cite{BM-prng}.

For these and other real-world applications it is entirely reasonable
to attack the large order problem via probabilistic and/or heuristic methods.
However, in this paper we are mainly interested in \emph{rigorous},
\emph{deterministic} algorithms,
with our principal motivation being the task of
rigorous and deterministic \emph{integer factorisation}.
We will return to this application in a few moments,
but let us first briefly discuss what can be said
about the large order problem in a wider algorithmic context.

\subsection{Heuristic and randomised algorithms}
\label{sec:randomised}

If our goal is merely to find an element that with \emph{high probability}
has order larger than~$D$,
then we may take a \emph{Monte Carlo} approach and simply
select some $\alpha \in \Z_N^*$ at random.
The success probability depends on the size of~$D$ and the density of
elements with order larger than~$D$.
This procedure is often effective for small values of~$D$,
and even for larger~$D$ when $\phi(N)$ (the order of the group~$\Z_N^*$)
has few small prime divisors.

It is possible to boost the success probability considerably if the
prime factorisation $N = p_1^{e_1} \cdots p_k^{e_k}$ is known.
(This includes the case that $N$ is itself prime.)
Indeed, after computing $\phi(N)$
and finding all of its ``small'' prime divisors~$q$,
we may easily determine the $q$-part of $\ord_N(\alpha)$
for a given $\alpha \in \Z_N^*$ by computing
$\alpha^{\phi(N)/q^j} \pmod{p_i^{e_i}}$ for various~$i$ and~$j$.
By sampling repeatedly,
we can easily find an element $\alpha \in \Z_N^*$
with maximal $q$-power order for all of these~$q$.
For such $\alpha$, it becomes overwhelmingly likely that it has
maximal $q$-power order for the remaining (unknown but large)
prime divisors $q$ of $\phi(N)$.

If however our goal is to return an element of order larger than $D$
with \emph{certainty} (a \emph{Las Vegas} algorithm),
then the key issue is to be able to actually compute the order of elements,
or otherwise certify that the order is large.
If we know the complete prime factorisation of both $N$ and $\phi(N)$,
then we can compute orders of elements in (deterministic) polynomial time.
Thus for example if $N$ is prime,
the probability of a random $\alpha \in \Z_N^*$ being a primitive root
is $\phi(N-1)/(N-1) \gg 1/\log \log N$,
so if we know the factorisation of $N-1$,
we can find a primitive root in randomised polynomial time
by sampling $\alpha \in \Z_N^*$ repeatedly until we succeed.

If we insist on deterministic algorithms,
then we may be able to get away with heuristic assumptions
as a substitute for randomness.
For example, under the Extended Riemann Hypothesis it is known
that for $N$ prime there exists a primitive root less than $(\log N)^{O(1)}$
\cite{W-leastprimroot,S-primroots}.
In this setting we can find a primitive root in deterministic polynomial time,
again assuming that the factorisation of $N-1$ is known.

It is clear from the preceding discussion that if we want certified output,
then regardless of whether we allow randomness or unproved hypotheses,
the real bottleneck is the integer factorisation step.
Of course, no polynomial time factorisation algorithm is known,
even allowing probabilistic or heuristic methods.
Algorithms such as the quadratic sieve and the general number field sieve
\cite{Rie-factorization,Wag-joy}
achieve heuristic runtime complexities subexponential in $\log N$.
The latter can (conjecturally) find the factorisations of $N$ and $\phi(N)$
in expected time
\[
L_N\left[1/3, \left(64/9\right)^{1/3}\right]
=
\exp\left(
\left(\left(64/9\right)^{1/3}+o(1)\right)
(\log N)^{1/3}(\log\log N)^{2/3}
\right).
\]
From a practical point of view,
it is this last bound that determines the difficulty
of the large order problem in its most general form.

\subsection{Rigorous and deterministic algorithms}

To motivate the approach to the large order problem taken in this paper,
we will first explain the crucial role it has played in recent work
on deterministic integer factorisation.

Until recently, the best complexity bounds for rigorously
factoring an integer~$N$ were of the form $N^{1/4+o(1)}$,
based on methods going back to Pollard \cite{Pol-factoring},
Strassen \cite{Str-factoring} and Coppersmith \cite{Cop-lowexponent}.
(Here, and for the rest of the paper,
time complexity always refers to the number of steps performed by a
deterministic multitape Turing machine.
For a detailed description of this computational model,
see \cite[Ch.\,2]{Pap-complexity} or \cite[\S1.6]{AHU-algorithms}.)
The first improvement on these bounds by a superpolynomial factor
(i.e., larger than any fixed power of $\log N$) appeared in \cite{Hit-BSGS},
and was based on the idea of solving the discrete logarithm problem
$\alpha^x = \alpha^{N+1}$ in $\Z_N^*$ for a given element $\alpha$.
If $N$ is a product of two distinct primes, say $N=pq$,
then the solution $x=p+q$ immediately reveals the two factors of $N$.
However, we can only find this solution if $\ord_N(\alpha)$
is sufficiently large.
Therefore, the paper also included an algorithm that,
for $D\geq N^{2/5}$, either finds an element of order at least~$D$,
or a nontrivial divisor of $N$, or proves $N$ prime \cite[Alg.\,6.2]{Hit-BSGS}.
Notice that this algorithm does not always actually
solve the large order problem for $(N,D)$;
if $N$ is composite, it is allowed to
return a nontrivial divisor of $N$ instead.
This is of course entirely reasonable in the context
of a factorisation algorithm.

The algorithm has two stages.
In the first stage, we search for the order of small elements
$\beta=2,3,\ldots$ via Sutherland's optimisation of
the classical babystep-giantstep method
\cite{Sha-classnumber,SutherlandPHD},
assuming that the order is at most $D$.
If for some element we fail to compute the order,
then the order exceeds $D$ and we are done.
If the order $m \coloneqq \ord_N(\beta)$ is actually found,
we try to use it to compute a nontrivial divisor of $N$,
by examining $\gcd(\beta^{m/r} - 1, N)$ for prime divisors $r$ of $m$.
If that fails, we deduce that $m$ divides $p-1$ for
every prime factor $p$ of $N$.
As we compute the exact orders of these small elements,
and continue to fail to factorise $N$,
we accumulate considerable information about the prime factors of $N$.
Eventually, we recover sufficient information to proceed to
the second stage of the algorithm,
in which we attempt to factorise $N$ directly.
If that fails too, we obtain a proof that $N$ is prime.
It is noteworthy that the time complexity of this procedure is
asymptotically the same as testing whether a single element
has order at most $D$, namely
\begin{equation}
\label{eq:previous-bound}
O\left(\frac{D^{1/2}}{(\log \log D)^{1/2}} \log^2 N\right).
\end{equation}

The 2018 algorithm for finding elements of large order
was later used as a subroutine in a series of works
\cite{Hit-timespace,Har-onefifth,HH-onefifthloglog}
that improved the runtime of deterministic factorisation to $N^{1/5+o(1)}$.
The original assumption $D \geq N^{2/5}$ was good enough
to directly apply the original algorithm in these factorisation methods,
but only just.
The hypothesis on $D$ was subsequently improved on two occasions,
first to $D \geq N^{1/4+o(1)}$ by Gao, Feng, Hu and Pan
\cite[Lem.\,3.5]{GFHP-factoringlattices}
and then to $D \geq N^{1/6}$ by Oznovich and Volk
\cite[Thm.\,1.1]{OV-highorder}.
These improvements are based on a better bound for the number of
consecutive elements with the same order $m$
(namely, $O(\sqrt{m})$ instead of $m$),
leading to a faster progression in the loop over the elements
$\beta = 2,3,\ldots$ in the first stage of the algorithm.
Both of these papers (and additionally \cite{HH-lehmangeneralization})
also considered the case of $N$ having an $r$-power divisor,
which allows the assumption to be relaxed further to $D \geq N^{1/6r}$
\cite[Thm.\,4.2]{OV-highorder}.
Furthermore, both \cite{GFHP-factoringlattices} and \cite{OV-highorder}
present improved techniques for the second stage,
in which $N$ is factorised based on a large known factor of $p-1$
for all primes $p \divides N$. 

All of the results mentioned so far impose a fairly restrictive
(in fact, fully exponential) lower bound hypothesis on $D$.
The main result of this paper is an algorithm that solves
the same problem in essentially the same amount of time,
but without any restriction on $D$ at all.
\begin{thm}
\label{thm:main}
There exists an algorithm with the following properties. 
It takes as input integers $N \geq 3$ and $D \geq 1$ with $D < N-1$.
It outputs either some $\alpha\in\Z_N^*$ with $\ord_N(\alpha) > D$
or a nontrivial divisor of $N$.
Its time complexity is
\begin{equation}
\label{eq:main-complexity}
O\left(\frac{D^{1/2} \log D}{(\log \log D)^{1/2}} \log N \right).
\end{equation}
\end{thm}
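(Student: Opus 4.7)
The proof proceeds by a case split on the size of $D$.

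When $D \geq N^{1/6}$, the algorithm of \cite{OV-highorder} already achieves the complexity bound \eqref{eq:main-complexity}. The only required modification is that their procedure may return a primality proof for $N$, whereas the new theorem demands an element of order exceeding $D$ even in that case. This is easily patched: the orders $m_i$ of the bases $\beta_i$ processed during their first stage satisfy $\lcm_i m_i = N - 1 > D$ whenever $N$ is prime, and a standard assembly in the abelian group $\Z_N^*$ produces from the $\beta_i$ an element $\alpha$ with $\ord_N(\alpha) = \lcm_i m_i$.

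The substantive case is $D < N^{1/6}$, where \eqref{eq:main-complexity} may be only polylogarithmic in $N$. The plan is to maintain a running lcm $L$ of the orders found so far, initialised to $L = 1$, and iterate through bases $\beta = 2, 3, \ldots$. For each $\beta$, compute $\gcd(\beta, N)$ and then the cheap check $\beta^L \bmod N$; skip $\beta$ if this equals $1$, otherwise invoke Sutherland's BSGS on $\beta$ with bound $D$. If BSGS certifies $\ord_N(\beta) > D$, return $\beta$; otherwise let $m = \ord_N(\beta) \leq D$, run the standard $\gcd(\beta^{m/r}-1, N)$ tests for primes $r \mid m$ (returning a factor if any is nontrivial), and replace $L$ by $\lcm(L, m)$. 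As soon as $L > D$, return an element of order $L$ assembled from the $\beta_i$ by the same abelian-group construction.

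The principal obstacle is the amortised time analysis. Each BSGS call on $\beta_i$ costs $O\!\left(m_i^{1/2}/(\log\log m_i)^{1/2} \log^2 N\right)$, and the budget \eqref{eq:main-complexity} tolerates only a single worst-case invocation with $m_i = D$. Since $m_i \nmid L_{i-1}$, $L$ grows on every call (by at least a factor $2$ along some prime-power component), so a naive count bounds the number of iterations by $O(\log D)$, exceeding the budget by a $\log D$ factor. The crucial estimate to prove is
\[
\sum_i \frac{m_i^{1/2}}{(\log\log m_i)^{1/2}} \;=\; O\!\left(\frac{D^{1/2}}{(\log\log D)^{1/2}}\right),
\]
which should follow from the observation that a large $m_i$ forces a proportionally large new prime-power contribution to $L$, so expensive calls are rare. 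The very small $D$ regime (say $D = O(\log^2 N)$) requires separate treatment, since the budget admits at most $O(1)$ BSGS calls; there, either a prime $\leq D$ divides $N$ (found by trial division) or $\lambda(N) > D$, in which case the bases $\beta = 2, 3, \ldots$ yield an element of order exceeding $D$ within a constant number of trials.
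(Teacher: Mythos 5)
Your proposal misses the paper's central new idea, and both of your case-specific patches rest on claims that are false or unsupported. In the regime $D \geq N^{1/6}$, your fix for the prime case asserts that the lcm of the orders of the bases processed in the first stage of \cite{OV-highorder} equals $N-1$ whenever $N$ is prime. It does not: that lcm is just the order of the subgroup of $\Z_N^*$ generated by the bases actually examined, which can be a proper divisor of $N-1$ and need not exceed $D$ at the moment their second stage takes over and outputs a primality proof. Producing an element of order $>D$ when $N$ is prime is precisely the hard part of the theorem, not a routine patch. Likewise, your fallback for very small $D$ (``if $\lambda(N) > D$ then a constant number of bases suffices'') is unjustified: nothing known prevents the first $O(1)$ integers from lying in a subgroup of order $\leq D$ modulo every prime factor of $N$; for a fixed bound $B$ the multiplicative span of $\{1,\dots,B\}$ modulo $p$ is only guaranteed to contain about $(\log p)^{\pi(B)}$ elements, which does not exceed $D \asymp \log^2 N$ once the prime factors of $N$ are only modestly larger than $D$.

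In the substantive regime $D < N^{1/6}$ your loop has no termination mechanism within the budget: if $N$ is composite and every prime $p \mid N$ satisfies $p \equiv 1 \pmod{L}$ with $L$ stuck at some value $\leq D$, the gcd tests never produce a factor, $L$ never exceeds $D$, and the iteration over $\beta$ runs without bound. The paper's key ingredient, absent from your plan, is the smooth-number argument: run the loop only for $\beta \leq B = \lceil D^{1/3}\rceil$; if it completes, then every $B$-smooth $n \leq p$ satisfies $n^M \equiv 1 \pmod p$, hence $\Psi(p,B) \leq M$, and the lower bound $\Psi(x,y) \geq x/(\log x)^{\log x/\log y}$ (\Cref{lem:smooth}) forces $p \leq Z \ll M(\log D)^{O(1)}$. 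A cheap second stage then finds a divisor by trial division along the progression $kM+1$ with $k \leq Z/M = (\log D)^{O(1)}$, and the same estimate (giving $Z < N$ when $D \leq N^{9/10}$, combined with Burgess when $D > N^{9/10}$) shows that a prime $N$ can never reach this stage — which is exactly how one guarantees an element of order $>D$ for prime $N$. Finally, the amortised bound you flag as ``the crucial estimate to prove'' is true but your heuristic for it is not: a large $m_i$ need not contribute a large new prime power to $L$ (the new contribution can be a single factor of $2$). The correct argument is dyadic: since $m \ndivides M$, the value $M$ at least doubles on every BSGS iteration, and once some $m > D/2^{l}$ occurs we have $M > D/2^{l}$, so at most $l$ further such iterations fit under the cap $M \leq D$; hence at most $l+1$ iterations have $m \in (D/2^{l}, D/2^{l-1}]$, and summing the resulting costs over $l$ yields \eqref{eq:main-complexity}.
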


Note that the expression $\log \log D$ does not literally make sense for $D \leq 2$; in cases like this, the reader should mentally substitute $\log \log \max(D, 3)$.
It can be shown that the space complexity in \Cref{thm:main} is given by
\[
O\left(\frac{D^{1/2}}{(\log \log D)^{1/2}} \log N\right),
\]
but we will not carry out the details of this analysis.

The reader will notice a small improvement in time complexity
between \eqref{eq:previous-bound} and \eqref{eq:main-complexity},
namely that one factor of $\log N$ has been replaced by $\log D$.
In the first version of this paper,
\Cref{thm:main} was stated with the previous bound \eqref{eq:previous-bound}
instead of \eqref{eq:main-complexity}.
An anonymous referee subsequently pointed out that the analysis of
Sutherland's order-finding algorithm could be improved slightly in the Turing model (see \Cref{lem:bsgs}),
leading to the tighter bound \eqref{eq:main-complexity}.

We point out several interesting features of this result:
\begin{itemize}
\item
Unlike its predecessors, this algorithm never returns ``$N$ is prime''.
In particular, if $N$ is prime, it \emph{always} returns
$\alpha \in \Z_N^*$ with $\ord_N(\alpha) > D$
(and thus solves the large order problem as originally stated).
This is interesting from a theoretical point of view,
as it permits rigorously finding not only primitive roots
but also elements of order exceeding $D$, even for small values of~$D$.
(When $D$ is sufficiently large, namely for $D \geq N^{1/2+\eps}$,
it was already known how to find elements of order at least $D$
in time $O(D^{1/2+\eps})$, as one can find a primitive root in time
$O(N^{1/4+\eps})$ \cite{Shp-primitive}.)
It is also worth mentioning that when $D$ is very small,
the runtime may be even less than the cost of testing $N$ for primality
via a deterministic polynomial time primality test,
such as the AKS test \cite{AKS-primes}.

\item
An important consequence of the theorem is that in the context of
\emph{any} deterministic factoring algorithm that runs in exponential time,
finding elements of large order should no longer be considered a bottleneck,
regardless of the exponent.

\item
\Cref{thm:main} outperforms the (heuristic) approach mentioned
in \Cref{sec:randomised} whenever
\[
D\ll L_N\left[1/3, 2(64/9)^{1/3}\right].
\]
For such inputs~$D$, we are not aware of a general-purpose
Las Vegas algorithm that,
without obtaining and using the prime factorisation of~$N$,
certifies $\ord_N(\alpha)>D$ faster than the
deterministic bound given in \eqref{eq:main-complexity}.
\end{itemize}

The main idea behind the improved algorithm is based on estimates for the density of integers that factor completely into primes $q \leq B$ for a given bound $B$, the so-called \emph{$B$-smooth numbers}.
The algorithm has a broadly similar structure to its predecessors.
In the first stage, we attempt to compute the order of all elements $\beta = 2,3,\ldots, B$, for a suitable choice of $B$.
If we have not yet found an element of large order,
or a factor of $N$ via GCDs as before,
then we have computed some $M \leq D$ such that all these elements satisfy the congruence $\beta^M \equiv 1 \pmod p$ for every prime factor $p$ of $N$, and such that $M \divides p-1$ for every such~$p$.
The key observation is that not only these $\beta$, but \emph{all $B$-smooth integers} in the interval $1 \leq n \leq p$ satisfy $n^M \equiv 1 \pmod p$.
But the latter congruence has at most $M$ solutions modulo $p$,
so by applying well-known lower bounds for the density of $B$-smooth numbers, we obtain drastically improved estimates for how fast $M$ grows as we progress through the main loop over $\beta$. This strategy incidentally leads to a considerable simplification in the second stage of the algorithm: it suffices to use trial division to check for divisors along the arithmetic progression $kM+1$ instead of applying more elaborate techniques.

The rest of the paper is structured as follows. \Cref{sec:prelim} discusses preliminaries such as the babystep-giantstep method for finding orders of elements, and bounds for the density of $B$-smooth numbers. Then the main theorem is proved in \Cref{sec:main}.

\begin{rem}
After the initial version of our paper was made public,
we became aware of independent concurrent work by Itamar Nir,
which was subsequently posted on arXiv \cite{Nir2026}.
Nir obtains a result similar to but weaker than our \Cref{thm:main},
the main difference being that it requires the \emph{subexponential}
bound $D > \exp(\sqrt{2 \log N \log \log N})$.
His argument makes similar use of smoothness bounds,
and according to Nir is simpler than our proof of \Cref{thm:main}.
\end{rem}

\subsection*{Acknowledgments}

The authors would like to thank the anonymous referee mentioned earlier,
whose comments led to the tightening of the bound in \Cref{thm:main},
and also the other anonymous referees whose suggestions helped to
improve the presentation of these results.

\section{Preliminaries}
\label{sec:prelim}
We sometimes use the notation $f \ll g$ as a synonym for $f = O(g)$. The notation $f \asymp g$ means that both $f \ll g$ and $g \ll f$ hold.

We will frequently use standard results on the complexity of integer arithmetic:
we may compute sums and differences of $n$-bit integers in time $O(n)$,
products in time $O(n \log n)$ \cite{HvdH-nlogn},
quotients and remainders in time $O(n \log n)$ \cite[Ch.\,9]{vzGG-compalg3},
$k$-th roots with remainder for fixed $k$
in time $O(n \log n)$ \cite[\S1.5]{BZ-mca},
and greatest common divisors (including finding the corresponding cofactors in the B\'ezout identity) in time $O(n \log^2 n)$ \cite[Ch.\,11]{vzGG-compalg3}.

Let $N \geq 2$. Elements of the ring $\Z_N$ are always represented by their residue in the interval $[0, N)$. Given $\alpha \in \Z_N$ and an integer $n \geq 1$, we may compute $\alpha^n$ in time $O(\log n \log N \log \log N)$ using the repeated squaring method \cite[\S4.3]{vzGG-compalg3}.

We will occasionally need to compute approximations
for logarithms and exponentials of real numbers.
We use standard algorithms to approximate these functions to $n$-bit precision
in time $O(n \log^2 n)$ \cite[\S4.8]{BZ-mca},
omitting the straightforward but tedious analysis of rounding errors.

Whenever we need to store the prime factorisation of a positive integer $n$, say $n = \prod_i q_i^{e_i}$, we always represent it by the list of pairs $(q_i,e_i)$, with the $q_i$ in increasing order and all $e_i \geq 1$. This representation occupies space $O(\log n)$.

At the heart of all our algorithms is a minor variant of Sutherland's well-known method for computing the order of an element of a group.
\begin{lem}
\label{lem:bsgs}
Given as input integers $N \geq 2$, $D \geq 1$ and an element $\alpha \in \Z_N^*$, we may determine if $\ord_N(\alpha) \leq D$,
and if so compute the exact order $m \geq 1$, in time
\[
O\left(\frac{r^{1/2}}{(\log \log r)^{1/2}} (\log r + \log \log N)\log N\right),
\qquad \text{where } r \coloneqq \min(m,D).
\]
\end{lem}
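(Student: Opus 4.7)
The plan is to invoke Sutherland's variant of babystep-giantstep \cite{SutherlandPHD} and wrap it in a doubling loop so that the running time depends on $r = \min(m, D)$ rather than on $D$ itself.

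First, the underlying subroutine: given a threshold $D' \geq 1$, Sutherland's method decides whether $\ord_N(\alpha) \leq D'$ and, if so, returns the exact order. Its key feature is the use of a factor base of small primes to choose a schedule of baby and giant steps that reduces the number of group operations from the classical $O(\sqrt{D'})$ down to $O(\sqrt{D'/\log\log D'})$. Each such operation is a multiplication modulo $N$, which costs $O(\log^2 N)$, so a single call with threshold $D'$ runs in time $O(\sqrt{D'/\log\log D'}\log^2 N)$.

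Next, to make the cost scale with $r$ instead of $D$, I would call this subroutine with exponentially increasing thresholds $D_i = \min(2^i, D)$ for $i = 1, 2, \ldots, \lceil \log_2 D \rceil$, halting as soon as either the exact order is returned or the last stage is reached. If $m \leq D$, the order is found at some stage $i^* \leq \lceil \log_2 m \rceil + 1$, and summing the costs of stages $1, \ldots, i^*$ gives a geometric series dominated by its last term, yielding total time $O(\sqrt{m/\log\log m}\log^2 N)$. If $m > D$, the algorithm traverses every stage without finding the order and correctly reports ``$\ord_N(\alpha) > D$'' at total cost $O(\sqrt{D/\log\log D}\log^2 N)$. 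In both cases the bound agrees with $r = \min(m, D)$.

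The main obstacle is the internal analysis of Sutherland's subroutine itself --- designing the factor-base schedule and verifying the $(\log\log D')^{-1/2}$ saving in the operation count. Since this is by now a standard technique developed carefully in \cite{SutherlandPHD}, I would simply import it as a black box. The remaining work is routine bookkeeping: performing the modular exponentiations by repeated squaring, comparing residues to detect a baby/giant step match, and summing the geometric series across stages.
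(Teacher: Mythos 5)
Your proposal matches the paper's proof essentially exactly: both invoke Sutherland's order-search as a black box with cost $O(T^{1/2}(\log\log T)^{-1/2}\log^2 N)$ for threshold $T$, and wrap it in a doubling loop over thresholds $1,2,4,\ldots$ up to about $D$, so the cost is governed by $r=\min(m,D)$. The only gloss is calling the resulting sum ``a geometric series dominated by its last term''; because of the $(\log\log 2^i)^{-1/2}$ factors this needs a one-line split (as in the paper, the terms with $i\leq \tfrac12\log_2 r$ contribute only $O(r^{1/4})$), but the conclusion and the approach are the same.
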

\begin{proof}
According to \cite[Alg.\,4.1]{SutherlandPHD},
we may search for the order of $\alpha$ up to a given $T \geq 1$
by performing $O(T^{1/2}(\log\log T)^{-1/2})$ group operations in $\Z_N^*$
plus a similar number of reads/writes to a lookup table, e.g., a hash table.
One cannot implement hash tables efficiently on a Turing machine,
but it is shown in \cite[Thm.\,6.1]{Hit-BSGS}
(see also \cite[Rem.\,2.5]{Hit-BSGS}) that by replacing the use of
lookup tables by sorted lists and the merge sort algorithm,
one can solve this problem in
\[
O\left(\frac{T^{1/2}}{(\log \log T)^{1/2}} \log^2 N\right)
\]
bit operations on a Turing machine.

As pointed out by an anonymous referee,
this bound can be improved slightly in the following way.
Let $n \coloneqq T^{1/2}(\log\log T)^{-1/2}$.
We may perform $O(n)$ group operations in $\Z_N^*$ using
$O(n \log N \log \log N)$ bit operations,
and we may sort a list of $O(n)$ elements of $\Z_N^*$ using
$O(n \log N \log n) = O(n \log N \log T)$ bit operations.
Inserting these estimates into the argument in \cite[Thm.\,6.1]{Hit-BSGS},
we conclude that we may search for the order of $\alpha$ up to $T$ within
\[
O\left(\frac{T^{1/2}}{(\log \log T)^{1/2}} (\log T + \log \log N) \log N\right)
\]
bit operations.

We apply this result successively for $T=1,2,4,\ldots, 2^{\lceil\log_2 D\rceil}$, terminating immediately if we find $m$. If $m$ is not found, we must have $m > 2^{\lceil\log_2 D\rceil} \geq D$, and we return ``$\ord_N(\alpha) > D$''. The overall time complexity bound follows from the estimate
\begin{align*}
\sum_{i=0}^{\lceil \log_2 r\rceil} 2^{i/2}(\log\log(2^i))^{-1/2} & \ll \sum_{i=0}^{\lceil (\log_2 r)/2\rceil}2^{i/2}+\sum_{i=\lceil (\log_2 r)/2\rceil+1}^{\lceil \log_2 r\rceil}2^{i/2}(\log i)^{-1/2} \\
& \ll r^{1/4} + r^{1/2}(\log\log r)^{-1/2} \\
& \ll r^{1/2}(\log\log r)^{-1/2}. \qedhere
\end{align*}
\end{proof}

\begin{lem}
\label{lem:compord}
Let $N \geq 2$.
Given as input elements $\alpha, \beta \in \Z_N^*$,
and their orders $u \coloneqq \ord_N(\alpha)$ and $v \coloneqq \ord_N(\beta)$,
together with the prime factorisations of $u$ and~$v$,
we may compute an element $\gamma \in \Z_N^*$ of order $w \coloneqq \lcm(u,v)$,
together with the prime factorisation of $w$,
in time
\[
O((\log u + \log v) \log N \log \log N).
\]
\end{lem}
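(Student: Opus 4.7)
The plan is to apply the standard coprime-decomposition trick for order lcms. Write $u=\prod_i p_i^{a_i}$ and $v=\prod_i p_i^{b_i}$ over the union of the prime supports of $u$ and $v$, so $w=\lcm(u,v)=\prod_i p_i^{c_i}$ with $c_i\coloneqq\max(a_i,b_i)$. Split $w=w_1 w_2$ where $w_1\coloneqq\prod_{a_i\geq b_i} p_i^{c_i}$ and $w_2\coloneqq\prod_{a_i<b_i} p_i^{c_i}$. By construction $\gcd(w_1,w_2)=1$, $w_1\mid u$, and $w_2\mid v$. I would set $\alpha'\coloneqq\alpha^{u/w_1}$, which has order exactly $w_1$, and $\beta'\coloneqq\beta^{v/w_2}$, which has order exactly $w_2$, and output $\gamma\coloneqq\alpha'\beta'$. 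Since the orders of $\alpha'$ and $\beta'$ are coprime, a short standard argument shows $\ord_N(\gamma)=w_1 w_2=w$.

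For the complexity analysis, I would merge the two sorted prime-factorisation lists of $u$ and $v$ in a single linear-time pass. This simultaneously produces the prime-power data for $w$, $w_1$ and $w_2$. The integer $u/w_1$ can then be obtained either by dividing $u$ by $w_1$ or by assembling its prime factorisation directly; in either case it has bit length at most $\log u$, and similarly $\log(v/w_2)\leq\log v$. Computing the modular powers $\alpha^{u/w_1}$ and $\beta^{v/w_2}$ by repeated squaring therefore costs $O((\log u+\log v)\log N\log\log N)$ using the bound quoted in \Cref{sec:prelim}, and a final modular multiplication of $\alpha'$ and $\beta'$ adds only $O(\log N\log\log N)$. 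The output prime factorisation of $w$ is precisely the merged list already produced.

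The only mildly delicate point is ensuring that the list bookkeeping, the exponent comparisons, the assembly of the quotients $u/w_1$ and $v/w_2$, and the representation of the output all genuinely fit inside the stated time bound. However, the total number of distinct primes involved is $O(\log u+\log v)$ and the integers being manipulated have bit length $O(\log u+\log v)$, so each of these auxiliary steps costs at most $O((\log u+\log v)^2)$, which is dominated by the repeated-squaring cost for any $\log u+\log v\leq\log N\log\log N$ and otherwise can be sharpened using fast arithmetic. In all cases the total is absorbed into the exponentiation cost, giving the claimed bound.
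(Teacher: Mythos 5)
Your proposal is correct and is essentially the paper's own argument: your $\alpha^{u/w_1}$ and $\beta^{v/w_2}$ are exactly the paper's $\alpha^s$ and $\beta^t$ with $s=\prod_{a_i<b_i}p_i^{a_i}$ and $t=\prod_{a_i\geq b_i}p_i^{b_i}$, followed by the same coprime-order product and the same repeated-squaring cost accounting. (The only cosmetic difference is that your worry about auxiliary bookkeeping exceeding the bound never arises, since $u,v<N$ forces $\log u+\log v\ll\log N$.)
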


\begin{proof}
Let $u=\prod_i q_i^{e_i}$ and $v=\prod_i q_i^{f_i}$ be the prime factorisations of $u$ and $v$ (temporarily allowing $e_i = 0$ and $f_i = 0$ so that we can use the same list of primes for both $u$ and $v$).
Set $\tilde \alpha \coloneqq \alpha^s$ and $\tilde \beta \coloneqq \beta^t$ where
\[
s \coloneqq \prod_{e_i < f_i} q_i^{e_i}, \qquad
t \coloneqq \prod_{e_i \geq f_i} q_i^{f_i}.
\]
Then $\ord_N(\tilde\alpha) = \prod_{e_i\geq f_i} q_i^{e_i}$ and $\ord_N(\tilde\beta) = \prod_{e_i<f_i} q_i^{f_i}$.
Note that $\ord_N(\tilde\alpha)$ and $\ord_N(\tilde\beta)$ are coprime and their product is equal to $w = \lcm(u,v)$.
Hence $\gamma \coloneqq \tilde\alpha \tilde\beta$ meets our requirements.

Computing $\tilde\alpha = \alpha^{s}$ via the standard binary powering algorithm requires time 
\[
O(\log s \log N \log \log N) = O(\log u \log N \log \log N).
\]
This bound also covers the cost of computing $s$ itself, which requires $O(\log u)$ multiplications 
of bit size $O(\log N)$. The analysis for $\tilde\beta$ is similar.
\end{proof}

\begin{lem}
\label{lem:porder}
Let $N \geq 2$ and $\beta \in \Z_N^*$, and put $m \coloneqq \ord_N(\beta)$. Then
\[
\ord_p(\beta) = m \qquad \text{for all primes $p$ dividing $N$}
\]
if and only if
\[
\gcd(\beta^{m/r} - 1, N) = 1 \qquad \text{for all primes $r$ dividing $m$}.
\]
\end{lem}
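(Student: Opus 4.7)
The proof is a direct order argument, split cleanly into the two implications of the biconditional. The key preliminary fact I would record up front is that for every prime $p \divides N$ the reduction map $\Z_N^* \to \Z_p^*$ is a group homomorphism sending $\beta$ to $\beta \bmod p$, so $\ord_p(\beta)$ must divide $\ord_N(\beta) = m$. Thus for each such $p$ we have $\ord_p(\beta) \divides m$, and the question is whether this divisor is equal to $m$ or a proper divisor.

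For the forward direction, assume $\ord_p(\beta) = m$ for every prime $p \divides N$. Fix a prime $r \divides m$. Then $m/r < m = \ord_p(\beta)$, so $\beta^{m/r} \nequiv 1 \pmod p$, i.e., $p \ndivides \beta^{m/r} - 1$. Since this holds for every prime $p \divides N$, we conclude $\gcd(\beta^{m/r} - 1, N) = 1$, as required.

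For the reverse direction I would argue the contrapositive. Suppose there exists a prime $p \divides N$ with $\ord_p(\beta) \neq m$, so $\ord_p(\beta)$ is a \emph{proper} divisor of $m$. Write $m = \ord_p(\beta) \cdot k$ with $k \geq 2$, and let $r$ be any prime divisor of $k$. Then $\ord_p(\beta) \divides m/r$, hence $\beta^{m/r} \equiv 1 \pmod p$, so $p \divides \gcd(\beta^{m/r} - 1, N)$ and the gcd is not $1$. This contradicts the hypothesis on the right-hand side, completing the proof.

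I do not expect any real obstacle here; the only step requiring care is the selection of the prime $r$ in the reverse direction, which relies on the elementary observation that any integer $k \geq 2$ has at least one prime divisor.
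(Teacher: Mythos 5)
Your proof is correct and complete: the observation that reduction modulo $p$ gives $\ord_p(\beta) \divides m$, followed by the two implications (choosing any prime $r \divides m$ in the forward direction, and a prime $r$ dividing the cofactor $m/\ord_p(\beta) \geq 2$ in the reverse direction), is exactly the standard argument. The paper itself gives no proof here, simply citing \cite[Lem.\,2.3]{Hit-BSGS}, and your self-contained argument matches that lemma's proof in substance, so there is nothing to correct.
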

\begin{proof}
This is \cite[Lem.\,2.3]{Hit-BSGS}.
\end{proof}

Recall that a positive integer $n$ is said to be \emph{$y$-smooth} if every prime factor $q$ of~$n$ satisfies $q \leq y$. For $x \geq 1$ we denote by $\Psi(x,y)$ the number of positive integers $n \leq x$ that are $y$-smooth. We will need the following lower bound for $\Psi(x,y)$.

\begin{lem}
\label{lem:smooth}
For $x\geq y\geq 2$ and $x\geq 4$ we have
\[
\Psi(x,y)\geq \frac{x}{(\log x)^{\log x / \log y}}.
\]
\end{lem}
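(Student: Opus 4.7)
Our plan is to count $y$-smooth integers of a convenient restricted shape. Set $u := \log x/\log y$ and $k := \lfloor u \rfloor$. Since $x \geq y$, we have $u \geq 1$ and hence $k \geq 1$. Let $p_1 < \cdots < p_r$ denote the primes up to $y$, where $r = \pi(y)$. Each tuple $(e_1, \ldots, e_r)$ of nonneg integers satisfying $e_1 + \cdots + e_r \leq k$ yields a distinct $y$-smooth integer
\[
n = p_1^{e_1} \cdots p_r^{e_r} \leq y^k \leq y^u = x,
\]
and by stars-and-bars the number of such tuples equals $\binom{r+k}{k}$ (adjoining a slack variable $e_0 := k - \sum_{i\geq 1} e_i$ absorbs the inequality). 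Thus
\[
\Psi(x, y) \geq \binom{\pi(y) + k}{k}.
\]

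To conclude, we must show $\binom{\pi(y) + k}{k} \geq x/(\log x)^u$. First I would dispose of the easy regime: if $y \leq \log x$, then $u \log \log x \geq \log x$, so $(\log x)^u \geq x$, and the claimed bound is at most $1$, which is automatic since $\Psi(x, y) \geq 1$. So we may assume $y > \log x$. Now apply the elementary estimate $\binom{n+k}{k} \geq ((n+k)/k)^k$ (each factor $(n+i)/i$ in the product expansion $\binom{n+k}{k} = \prod_{i=1}^{k}(n+i)/i$ is at least $(n+k)/k$ because $i \leq k$), yielding $\Psi(x, y) \geq (1 + \pi(y)/k)^k$. Combining this with an explicit Chebyshev-type bound $\pi(y) \geq c y/\log y$ valid for $y \geq 2$, together with the identities $\log x = u \log y$ and $u - 1 \leq k \leq u$, the required inequality reduces to elementary but careful algebraic manipulation.

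The main technical obstacle I foresee is this final algebraic step. In the borderline regime where $\pi(y)$ is comparable to $k$, the binomial lower bound is only moderately large, so one must track the Chebyshev constant tightly. For the smallest cases such as $y = 2$ or $y = 3$ it may be cleaner to verify the inequality by hand, invoking the exact count $\Psi(x, 2) = \lfloor \log_2 x \rfloor + 1$ (and similarly for $y = 3$), rather than funnelling these cases through the general combinatorial bound.
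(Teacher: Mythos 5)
The paper does not actually prove this lemma: it simply cites it from Konyagin's survey (\S 2, Thm.\,1 of the cited reference), i.e.\ the Konyagin--Pomerance lower bound for smooth numbers. So your attempt at a self-contained proof is necessarily a different route --- but as it stands it has a genuine gap, and not merely an unfinished computation. Your restricted count only includes $y$-smooth numbers with at most $k=\lfloor u\rfloor$ prime factors counted with multiplicity, where $u=\log x/\log y$; in effect you only see (a small fraction of) the smooth numbers up to $y^{k}$, and you discard every smooth $n\in(y^{k},x]$ as well as every smooth $n\leq y^k$ with more than $k$ prime factors. The inequality you would need to finish, $\binom{\pi(y)+k}{k}\geq x/(\log x)^{u}$, is false in the main regime $y>\log x$. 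For instance take $x=y^{1.9}$ with $y$ large: then $k=1$, your bound gives only $\Psi(x,y)\geq \pi(y)+1\asymp y/\log y$, while the right-hand side is $y^{1.9}/(1.9\log y)^{1.9}$, larger by a factor of order $y^{0.9}$ up to logarithms. The same failure occurs whenever the fractional part of $u$ is bounded away from $0$ and $y$ is large, which is precisely the regime in which the lemma is used in this paper ($y=B\approx D^{1/3}$ large, $u$ bounded by roughly $3$). Consequently the ``elementary but careful algebraic manipulation'' you defer cannot be carried out, and no sharpening of the Chebyshev constant helps, since the loss is in the counting step itself, not in the estimate for $\pi(y)$. (Your reduction of the case $y\leq\log x$ to $\Psi\geq 1$ is fine, and the stars-and-bars and $\binom{n+k}{k}\geq((n+k)/k)^{k}$ steps are correct; they are just not strong enough.) A correct self-contained argument needs a finer counting of smooth numbers, as in the Konyagin--Pomerance proof; otherwise, the honest course is to cite the result, as the paper does.
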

\begin{proof}
This is \cite[\S2, Thm.\,1]{Konyagin2013}.
\end{proof}

For any prime $p$, let $g(p)$ denote the least primitive root modulo $p$.
\begin{lem}
\label{lem:burgess}
For any $\eps > 0$ we have $g(p) \ll p^{1/4+\eps}$.
\end{lem}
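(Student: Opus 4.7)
The plan is to bound $g(p)$ by counting primitive roots in the interval $[1,x]$ and choosing $x = p^{1/4+\eps}$ small enough that something must exist in there. The standard tool is the Vinogradov indicator: for $n$ coprime to $p$,
\[
\mathbf{1}_{\{n \text{ is a primitive root mod } p\}} = \frac{\phi(p-1)}{p-1} \sum_{d \divides p-1} \frac{\mu(d)}{\phi(d)} \sum_{\substack{\chi \bmod p \\ \ord(\chi) = d}} \chi(n).
\]
Summing over $n \leq x$, the $d=1$ term contributes the main term $\frac{\phi(p-1)}{p-1}\, x$, while the remaining terms involve incomplete character sums $\sum_{n \leq x} \chi(n)$ for nontrivial characters $\chi$ modulo $p$.

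The main obstacle is controlling these character sums when $x$ is as small as $p^{1/4+\eps}$; this is handled by Burgess's classical estimate \cite{Burgess-charsums}, which states that for any integer $r \geq 1$ and any nonprincipal character $\chi$ modulo $p$,
\[
\Bigl| \sum_{n \leq x} \chi(n) \Bigr| \ll_r x^{1-1/r}\, p^{(r+1)/(4r^2)}\, (\log p)^{1/r}.
\]
Since the number of nonprincipal characters of each order $d$ is $\phi(d)$, the $\phi(d)$ in the denominator of Vinogradov's formula cancels, and the total contribution of the error terms is at most
\[
\tau(p-1) \cdot x^{1-1/r}\, p^{(r+1)/(4r^2)}\, (\log p)^{1/r}
\;\ll\; p^{o(1)} \cdot x^{1-1/r}\, p^{(r+1)/(4r^2)},
\]
using $\tau(p-1) = p^{o(1)}$.

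For the main term to dominate, it suffices that $x \gg p^{(r+1)/(4r) + o(1)}$. Given $\eps > 0$, we pick $r$ large enough that $(r+1)/(4r) < 1/4 + \eps/2$; then any $x$ of the form $C_\eps\, p^{1/4+\eps}$, with $C_\eps$ sufficiently large, yields
\[
\#\{\text{primitive roots in } [1,x]\} \geq \tfrac{1}{2} \cdot \frac{\phi(p-1)}{p-1}\, x > 0,
\]
so a primitive root exists below $C_\eps\, p^{1/4+\eps}$. This gives $g(p) \ll p^{1/4+\eps}$ as claimed. (Alternatively, one may simply cite Burgess's original paper, which establishes exactly this bound.)
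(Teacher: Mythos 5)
Your proposal is correct: the paper gives no argument of its own for this lemma, simply citing Burgess \cite{B-leastprimitiveroot}, and your derivation via Vinogradov's indicator formula combined with the Burgess character sum estimate (valid for all $r \geq 1$ since the modulus $p$ is prime) is precisely the standard argument underlying that citation. So this is essentially the same approach, just written out rather than referenced.
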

\begin{proof}
See \cite{B-leastprimitiveroot}.
\end{proof}

\section{Proof of the main theorem}
\label{sec:main}

In this section we prove \Cref{thm:main}.
Recall that the input consists of integers $N \geq 3$ and $D \geq 1$ with $D < N-1$.
Pseudocode for the algorithm is given in \Cref{alg:largeorder}.
In Section \ref{sec:correctness} we prove that the output of the algorithm is correct, and in Section \ref{sec:complexity} we analyse its complexity.

\begin{algorithm}
\caption{The main algorithm}

\raggedright
\textbf{Input}: Integers $N \geq 3$ and $D \geq 1$ with $D < N-1$.

\textbf{Output}: Either some $\alpha \in \Z_N^*$ with $\ord_N(\alpha) > D$, or a nontrivial divisor $d \divides N$.
	
\begin{algorithmic}[1]

\LineComment{See proof of \Cref{thm:main} for the definition of the constant $N_0$.}

\If{$N < N_0$} return suitable $\alpha$ or $d$ via a lookup table. \EndIf
\label{line:lookup}

\If{$N$ is even} return $d \coloneqq 2$. \EndIf
\label{line:N-even}

\If{$2^D < N$} return $\alpha \coloneqq 2$. \EndIf
\label{line:D-small}

\State{Set $B \coloneqq \lceil D^{1/3} \rceil$.}
\label{line:compute-B}

\vspace{5pt}

\LineComment{On entry to each iteration of the for-loop below, $M$ is equal to $\ord_N(\alpha)$.}

\LineComment{The factorisation of $M$ is always known and stored alongside $M$.}

\State Set $\alpha \coloneqq 1$ and $M \coloneqq 1$.
\label{line:alpha-M-init}

\For{$\beta = 2, 3, \ldots, B$}
\label{line:for-loop}

\If{$\beta \divides N$} return $d \coloneqq \beta$. \EndIf
\label{line:beta-divides-N}

\If{$\beta^M \equiv 1 \pmod N$} go to line \ref{line:for-loop} (proceed to next value of $\beta$). \EndIf
\label{line:beta-to-M}

\State Use \Cref{lem:bsgs} to search for $m \coloneqq \ord_N(\beta)$ up to $D$.
\label{line:m-search}

\If{$m > D$} return $\alpha \coloneqq \beta$. \EndIf
\label{line:m-large}

\State Compute the prime factorisation of $m$.
\label{line:m-factor}

\For{each prime $r \divides m$}
	\label{line:r-loop}
	\If{$d \coloneqq \gcd(\beta^{m/r} - 1, N) \neq 1$} return $d$. \EndIf
	\label{line:gcds}
\EndFor

\State Apply \Cref{lem:compord} to $\alpha$ and $\beta$ to compute $\alpha'$ of order $M' \coloneqq \lcm(m, M)$.
\label{line:lcm}

\State Set $\alpha \coloneqq \alpha'$ and $M \coloneqq M'$.
\label{line:alpha-M-update}

\If{$M > D$} return $\alpha$. \EndIf
\label{line:M-large}
\EndFor

\vspace{5pt}
\StateFullWidth{Compute an integer $Z$ such that $\tilde Z < Z < \tilde Z + 2$ where}
\label{line:compute-Z}
\[
\tilde Z \coloneqq 2M \cdot (\log 2M)^{(\log 2M) / (-1 + \log B)}.
\vspace{5pt}
\]

\For{$k = 1, 2, \ldots, \lfloor Z/M \rfloor$}
\label{line:k-loop}

\If{$d \coloneqq kM + 1 \divides N$} return $d$ \EndIf
\label{line:d-test}
\EndFor

\LineComment{The proof of \Cref{thm:main} shows that this line is never reached.}

\end{algorithmic}
\label{alg:largeorder}
\end{algorithm}

In what follows, various arguments will require $N$ or $D$ to be ``sufficiently large''.
We assume that $N_0$ and $D_0$ are absolute constants chosen so that these arguments are valid for all $N \geq N_0$ and $D \geq D_0$.
Note that after line \ref{line:lookup} we have $N \geq N_0$.
Assuming we choose $N_0 \geq 2^{D_0}$,
then after line \ref{line:D-small} we also have $D \geq \log_2 N \geq D_0$.

\subsection{Proof of correctness}
\label{sec:correctness}

A few cases are easily dealt with:
\begin{itemize}
\item
If the algorithm terminates in lines \ref{line:lookup}--\ref{line:D-small}, the output is clearly correct.
(In line \ref{line:lookup}, if $N$ is composite, we return any nontrivial divisor, and if $N$ is prime, we return a primitive root modulo $N$, which has order $N-1 > D$.)
\item
For sufficiently large $N$ and $D$ we have $B < D^{1/3} + 1 < N^{1/3} + 1 < N$.
Hence if the algorithm terminates in line \ref{line:beta-divides-N}, the output is correct.
\item
If the algorithm terminates in line \ref{line:m-large}, the output is clearly correct.
\item
The GCD in line \ref{line:gcds} can never be equal to $N$,
because the definition of $m$ ensures that $\beta^{m/r} \not\equiv 1 \pmod N$.
Thus if the algorithm terminates in this line, the output is correct.
\item
It is clear that the main for-loop maintains the invariant $M = \ord_N(\alpha)$
(thanks to lines \ref{line:alpha-M-init}, \ref{line:m-search}, \ref{line:lcm} and \ref{line:alpha-M-update}),
so if the algorithm terminates in \ref{line:M-large}, the output is correct.
\item
If $N$ is prime and sufficiently large, and if $D > N^{9/10}$, then in some iteration the algorithm will terminate correctly in line \ref{line:m-large}.
Indeed, Lemma \ref{lem:burgess} implies that the least primitive root modulo $N$ satisfies $g(N) < N^{3/10}$ for sufficiently large $N$.
If $D > N^{9/10}$ then $B > N^{3/10}$,
so line \ref{line:m-large} is guaranteed to find a primitive root,
which has order $N-1 > D$.
\end{itemize}

Assume now that the algorithm has reached line \ref{line:compute-Z}.
It remains to prove that the loop in lines \ref{line:k-loop}--\ref{line:d-test} will succeed in finding a nontrivial divisor $d \divides N$.
For the rest of this section, the symbol $M$ refers to the value of $M$ at the point that line \ref{line:compute-Z} is reached.
Note that $M \leq D$ due to line \ref{line:M-large}.

We claim that every prime divisor $p$ of $N$ satisfies
\begin{gather}
\label{eq:p-congruence}
p \equiv 1 \tpmod M, \\
\label{eq:Psi-p-bound}
\Psi(p, B) \leq M.
\end{gather}
Since no factors were found in line \ref{line:gcds} in any iteration of the for-loop,
\Cref{lem:porder} implies that $\ord_p(\beta) = m$ at each iteration,
and hence that $m \divides p-1$.
Since $M$ is the LCM of these values of $m$, we have $M \divides p-1$, i.e., \eqref{eq:p-congruence} holds.
Next, line~\ref{line:beta-divides-N} ensures that $p > B$,
and lines \ref{line:beta-to-M}, \ref{line:m-search} and \ref{line:lcm} ensure that $\beta^M \equiv 1 \pmod N$ for all positive integers $\beta \leq B$.
It follows that $n^M \equiv 1 \pmod p$ for all positive integers $n \leq p$ that are $B$-smooth.
The number of such integers $n$ is by definition $\Psi(p,B)$.
On the other hand, since $p$ is prime there are at most $M$ solutions to the congruence $n^M \equiv 1 \pmod p$. Thus \eqref{eq:Psi-p-bound} must hold.

Our main task is now to use \eqref{eq:Psi-p-bound} to show that
\begin{equation}
\label{eq:p-bound}
p \leq Z,
\end{equation}
where $Z$ is defined as in line \ref{line:compute-Z}.
Note that the definition of $\tilde Z$ makes sense, as both $M$ and $B$ are bounded well away from zero; for example $M \geq \Psi(p, B) \geq B \geq 4$ for sufficiently large $D$.

To prove \eqref{eq:p-bound}, it is enough to show that
\begin{equation}
\label{eq:Psi-tildeZ-bound}
\Psi(\tilde Z, B) > M.
\end{equation}
For if instead we had $p > Z$, then since $\Psi(x, B)$ is non-decreasing in $x$, we would have $\Psi(p, B) \geq \Psi(Z, B) \geq \Psi(\tilde Z, B) > M$, but this contradicts \eqref{eq:Psi-p-bound}.

Applying \Cref{lem:smooth} to $\Psi(\tilde Z, B)$
(this is valid as $\tilde Z > 2M > B \geq 4$) yields
\[
\Psi(\tilde Z, B) \geq \frac{\tilde Z}{(\log \tilde Z)^{\log \tilde Z / \log B}}.
\]
Thus to prove \eqref{eq:Psi-tildeZ-bound} it suffices to show that
\[
\frac{\tilde Z}{(\log \tilde Z)^{\log \tilde Z / \log B}} > M.
\]
In fact, we will prove the more precise statement that
\begin{equation}
\label{eq:tildeZ-interval}
M < \frac{\tilde Z}{(\log \tilde Z)^{\log \tilde Z / \log B}} < 4M.
\end{equation}

To prove \eqref{eq:tildeZ-interval}, let
\begin{equation}
\label{eq:z-formula}
z \coloneqq \log \tilde Z = \left(1 + \frac{\log \log 2M}{-1 + \log B}\right) \log 2M.
\end{equation}
Since $M \leq D$ and $\log B \asymp \log D$ we have
\begin{equation}
\label{eq:loglogM-logB-bound}
\frac{\log \log 2M}{-1 + \log B} \ll \frac{\log \log D}{\log D}.
\end{equation}
So by using the estimate $\log(1+x) = x + O(x^2)$, for sufficiently large $D$ we obtain
\begin{align*}
\log z
& = \frac{\log \log 2M}{-1 + \log B} + \log \log 2M + O\left(\frac{(\log \log D)^2}{\log^2 D}\right) \\
& = \frac{\log B}{-1 + \log B} \cdot \log \log 2M + O\left(\frac{(\log \log D)^2}{\log^2 D}\right).
\end{align*}
Thus
\[
1 - \frac{\log z}{\log B} = 1 - \frac{\log \log 2M}{-1 + \log B} + O\left(\frac{(\log \log D)^2}{\log^3 D}\right),
\]
and it follows that
\begin{align*}
z \left(1 - \frac{\log z}{\log B}\right)
& = \left(1 + \frac{\log \log 2M}{-1 + \log B}\right) \left(1 - \frac{\log \log 2M}{-1 + \log B} + O\left(\frac{(\log \log D)^2}{\log^3 D}\right)\right) \log 2M \\
& = \left(1 - \frac{(\log \log 2M)^2}{(-1 + \log B)^2} + O\left(\frac{(\log \log D)^2}{\log^3 D}\right)\right) \log 2M \\
& = \left(1 + O\left(\frac{(\log \log D)^2}{\log^2 D}\right)\right) \log 2M \\
& = \log 2M + O\left(\frac{(\log \log D)^2}{\log D}\right).
\end{align*}
For sufficiently large $D$, this implies that
\[
\log M < z \left(1 - \frac{\log z}{\log B}\right) < \log 4M,
\]
which is equivalent to \eqref{eq:tildeZ-interval}.
This completes the proof of \eqref{eq:p-bound}.

At this point we know that every prime $p$ dividing $N$ satisfies \eqref{eq:p-congruence} and \eqref{eq:p-bound}.
The final loop in lines \ref{line:k-loop}--\ref{line:d-test} simply tests every possible candidate.
If $N$ is composite, the loop will correctly terminate when it reaches the smallest prime divisor of $N$.
(If we let it run to completion, it will actually find all prime divisors of $N$.)

Now suppose that $N$ is prime.
As shown earlier, if $D > N^{9/10}$, then the algorithm would have already terminated in line \ref{line:m-large}.
Assume therefore that $D \leq N^{9/10}$.
From \eqref{eq:loglogM-logB-bound} we see that for large enough $D$ we have
$z \leq \frac{19}{18} \log 2M \leq \frac{19}{18} \log 2D$, and then
\[
Z < \tilde Z + 2 \leq (2D)^{19/18} + 2 \ll D^{19/18} \leq (N^{9/10})^{19/18} = N^{19/20},
\]
so $Z < N$ for sufficiently large $N$. But the only prime divisor of $N$ is $p = N$, which manifestly cannot satisfy \eqref{eq:p-bound} if $Z < N$. We have reached a contradiction; in other words, if $N$ is prime, the algorithm can never reach line \ref{line:compute-Z}, and must have already terminated (with a correct output) during the main for-loop.

\subsection{Complexity analysis}
\label{sec:complexity}

Lines \ref{line:lookup}--\ref{line:D-small} run in time $O(\log N)$.
After line \ref{line:D-small} we have $D \geq \log_2 N$,
so we may henceforth assume that $\log N \ll D$.
The computation of~$B$ in line \ref{line:compute-B} requires time
$O(\log N \log \log N) = O(\log N \log D)$.

Consider a single iteration of the for-loop.
If we have $\beta^M \equiv 1 \pmod N$ in line~\ref{line:beta-to-M}, the iteration is aborted and its complexity is $O(\log M \log N \log \log N)$.
Since $M \leq D$ for all of these iterations,
their total cost is
$O(B \log D \log N \log \log N) = O(D^{1/3} \log^2 D \log N)$,
which is negligible compared to \eqref{eq:main-complexity}.

We next consider the complexity of an iteration that reaches the search for $m$ in line \ref{line:m-search}.
If $m>D$, \Cref{lem:bsgs} implies that the algorithm terminates in line \ref{line:m-large} within the cost given by \eqref{eq:main-complexity}.
We hence assume that $m \leq D$. The complexity of line \ref{line:m-search} is then
\begin{equation} 
\label{eq:iter}
O\left(\frac{m^{1/2}}{(\log \log m)^{1/2}} \log D \log N\right).
\end{equation}
In line \ref{line:m-factor},
we find all divisors of $m$ via trial division in time
$O(m^{1/2}\log m \log\log m)$,
which is negligible compared to \eqref{eq:iter}.
In line \ref{line:gcds}, the cost of computing each power $\beta^{m/r} \pmod N$ is $O(\log m \log N \log \log N)$, and each GCD requires time $O(\log N \, (\log \log N)^2)$.
The number of primes $r$ dividing $m$ is $O(\log m)$,
so the cost of lines~\ref{line:r-loop}--\ref{line:gcds} is at most 
$O(\log^2 m \log N \, (\log \log N)^2) = O(\log^4 D \log N)$.
The total over all iterations is
$O(B \log^4 D \log N) = O(D^{1/3} \log^4 D \log N)$
which is negligible compared to \eqref{eq:main-complexity}.
According to \Cref{lem:compord},
line \ref{line:lcm} runs in time
$O((\log m +\log M) \log N \log \log N) = O((\log m + \log M) \log D \log N)$.
The first summand is bounded by \eqref{eq:iter}.
Since $M\leq D$, the combined cost of the second summand over all iterations is
$O(B \log^2 D \log N)=O(D^{1/3}\log^2 D \log N)$, which is negligible compared to 
\eqref{eq:main-complexity}.
We have now shown that in any iteration of the for-loop
that reaches line~\ref{line:m-search},
the cost of each step is either bounded by \eqref{eq:iter},
or is negligible when summed over all iterations.
It thus remains to estimate the sum of \eqref{eq:iter} over all iterations. 
	
Since we have assumed that $m \leq D$, for each such iteration we must have
\begin{equation}
\label{eq:m-interval}
\frac{D}{2^{l}} < m \leq \frac{D}{2^{l-1}}
\end{equation}
for some $l \in \{1, 2, \ldots, \lceil \log_2 D\rceil\}$. We claim that for each possible $l$, the number of iterations for which \eqref{eq:m-interval} holds with the given $l$ is at most $l+1$. To see this, first observe that line~\ref{line:beta-to-M} ensures that $m \ndivides M$ and hence that $M$ increases by a factor of at least 2 in each iteration that reaches line \ref{line:m-search}. On the first iteration that \eqref{eq:m-interval} occurs for a given $l$, we certainly have $M > D/2^l$ after line \ref{line:alpha-M-update}. After this point, the inequality $M \leq D$ can hold for at most $l$ more iterations, because line \ref{line:M-large} terminates the algorithm as soon as $M > D$ occurs. Therefore \eqref{eq:m-interval} can occur at most $l+1$ times as claimed.

Summing the total complexity over powers of two, we obtain
\begin{multline*}
\sum_{l=1}^{\lceil\log_2 D\rceil}
	\frac{(l+1)(D/2^{l-1})^{1/2}}{(\log\log(D/2^{l-1}))^{1/2}} \log D \log N \\
\ll D^{1/2} \log D \log N \sum_{l=1}^{\lceil\log_2 D\rceil}
	\frac{l/2^{l/2}}{(\log\log(D/2^{l-1}))^{1/2}}.
\end{multline*}
To estimate the inner sum, observe that
\begin{align*}
& \sum_{l=1}^{\lceil\log_2 D\rceil} \frac{l/2^{l/2}}{(\log\log(D/2^{l-1}))^{1/2}}\\
& \hspace{2em} \ll \sum_{l=1}^{\lceil(\log_2 D)/2\rceil-1} \frac{l/2^{l/2}}{(\log\log(D/2^{l-1}))^{1/2}} + \sum_{l=\lceil(\log_2 D)/2\rceil}^{\lceil\log_2 D\rceil} l/2^{l/2} \\
& \hspace{2em} \ll \sum_{l=1}^{\lceil(\log_2 D)/2\rceil-1} \frac{l/2^{l/2}}{(\log\log (D^{1/2}))^{1/2}} + D^{-1/4+o(1)} \ll (\log\log D)^{-1/2}.
\end{align*}
Thus the total contribution of \eqref{eq:iter} over all iterations is bounded by \eqref{eq:main-complexity}.

Finally we consider the complexity of lines \ref{line:compute-Z}--\ref{line:d-test}.
From \eqref{eq:loglogM-logB-bound} we have
$z \ll \log M \leq \log D \leq \log N$ and hence $\tilde Z < N^{O(1)}$.
Using the formula \eqref{eq:z-formula},
we may approximate $z$ and then $\tilde Z = e^z$ with
$O(\log N)$ bits of precision in time
$O(\log N \, (\log \log N)^2) = O(\log^2 D \log N)$.
Thus the cost of computing $Z$ in line \ref{line:compute-Z} is
$O(\log^2 D \log N)$.
The cost of the loop in lines \ref{line:k-loop}--\ref{line:d-test} is
$O((Z/M) \log N \log \log N) = O((Z/M) \log D \log N)$.
We have
\[
\frac{Z}{M} \ll \frac{\tilde Z}{2M} = (\log 2M)^{(\log 2M) / (-1 + \log B)}.
\]
Taking logarithms,
\[
\log(Z/M) < \frac{\log 2M \log \log 2M}{-1 + \log B} + O(1) \ll \frac{\log D \log \log D}{\log D} = \log \log D.
\]
This implies that $Z/M < (\log D)^{O(1)}$, so the cost of the loop is
\[
(\log D)^{O(1)} \log N,
\]
which is negligible compared to \eqref{eq:main-complexity}.

\bibliographystyle{amsalpha}
\bibliography{main}

\end{document}